\newtheorem{theorem}{Theorem} 
\newtheorem{corollary}{Corollary} 
\newtheorem{definition}{Definition}
\theoremstyle{definition} 
\newtheorem{remark}{Remark}
 \newcommand{\diag}{\mathrm{diag}}
\newcommand{\e}{\varepsilon}
\newcommand{\R}{\mathbb{R}}
 \newcommand{\id}{\mathrm{Id}}
\newcommand{\n}[1]{\mathbf{n}_{[#1]}}
\newcommand{\C}{\mathcal C}
\title{Godunov variables and convex entropy for relativistic fluid dynamics with bulk viscosity}
\author[H.~Olbermann]{Heiner Olbermann} 
\address[Heiner Olbermann]{Institut de Recherche en Math\'ematique et Physique, UCLouvain, 1348 Louvain-la-Neuve, Belgium}
\email[Heiner Olbermann]{heiner.olbermann@uclouvain.be}
\date{\today} 
\begin{document}

\maketitle

\begin{abstract} Based on the conservation-dissipation formalism proposed by Zhu and collaborators \cite{zhu2015conservation} we formulate a general version of the Israel-Stewart theory for relativistic fluid dynamics with bulk viscosity. Our generalization consists in allowing for a wide range of dependence of the  entropy density   on the bulk viscosity. We show the existence of   Godunov-Boillat variables for this model. By known properties of systems possessing such  variables,  this provides an alternative proof of the recently established  existence of solutions for the Israel-Stewart theory locally in time, and a proof that entropy production is positive across  weak Lax shocks. 
  \end{abstract}

\section{Introduction}
The theories of relativistic fluid dynamics including viscosity by M\"uller \cite{muller1967paradoxon} and Israel \& Stewart \cite{israel1979transient} are based on the idea to introduce thermodynamic fluxes as independent dynamic variables. For each of the new variables, one equation of motion has to be added to the conservation laws of energy-momentum and particle density. There are several variants of the theory, depending on the number of additional variables one includes, and other modeling choices; we will call all of these different theories M\"uller-Stewart-Israel (MIS) type theories. They have a long and successful history as a tool in simulations, in particular for quark-gluon plasmas \cite{romatschke2019relativistic}. The mathematical underpinning of MIS theories   is  less developed. Only recently  statements about well-posedness  have been proved: For the theory including  bulk viscosity the existence of solutions in Sobolev spaces has been shown \cite{bemfica2019causality}\footnote{In \cite{bemfica2019causality}, the equations of motion for the fluid are coupled with the Einstein equations, and existence is shown for this larger system. We will limit ourselves here to a fixed Minkowskian spacetime.}, and for the theory including bulk and shear viscosity as well as heat fluxes in Gevrey spaces \cite{bemfica2021nonlinear,MR3927412}.  The proof in \cite{bemfica2019causality} is based on writing the equations as a first order symmetric hyperbolic system, for which the existence of solutions locally in time is guaranteed by classical results by Friedrichs \cite{friedrichs1954symmetric} and Lax \cite{lax1957hyperbolic} (see also the textbooks \cite{dafermos2005hyperbolic,MR2744149}). In \cite{disconzi2020breakdown} it has been shown that (at least for certain initial conditions and choices of the parameter of the model) the MIS equations with bulk viscosity produce breakdown of smooth solutions in finite time.
\medskip

In the present article, we will formulate a MIS-type theory with bulk viscosity following the so-called conformal-dissipation formalism (CDF) by Zhu, Hong, Yang \& Yong \cite{zhu2015conservation}. The CDF is based on elements from extended irreversible thermodynamics \cite{jou1996extended}, rational extended thermodynamics \cite{muller2013rational} and the GENERIC (general equations for non-equilibrium reversible irreversible coupling) formalism \cite{ottinger2005beyond}.  It consists in enlarging the state space of thermodynamic variables by initially non-specified dynamic non-equilibrium variables. The formalism is set up in a way to guarantee that the equations of motions are a hyperbolic relaxation system, i.e., of the form
\[
  \partial_\alpha F^\alpha(U)=Q(U)\,,
\]
where $U$ is an  unknown $m$-dimensional vector valued function, and $Q,F^\alpha$ are given $m$-dimensional vector  valued functions, and we are using the convention that double greek indices are being summed over ($\partial_\alpha F^\alpha(U)\equiv \sum_{\alpha=0}^d \frac{\partial}{\partial{x^\alpha}}F^\alpha(U)$). In particular, this form allows for a straightforward definition of a concept of weak solutions, including discontinous shocks  to which Lax's classic analysis \cite{lax1973hyperbolic} applies. Dissipative relativistic fluid equations  in ``divergence form'' have been considered in \cite{geroch1991causal,geroch1990dissipative}, but to the best of the author's knowledge this  is  the first time that it is pointed out  that the MIS equations with bulk viscosity can be cast in  this form.

\medskip

To set up our model, we will introduce only one dynamic non-equilibrium variable $\C$ in addition to the dynamic variables present in the three-dimensionsl relativistic Euler equations. This variable will be associated with the bulk viscosity $\pi$. The specific entropy density $s$ will then depend on the equilibrium variables (on the internal energy per particle $\e$ and the specific volume per particle  $\nu$, say) and the additionally introduced  variable $\C$. The relation of $\C$ to the bulk viscosity $\pi$ will be fixed through Legendre-Fenchel duality, and the equation of motion for $\C$ will be fixed by the requirement that the entropy production shall be non-negative. The standard MIS equations for bulk viscosity become a special case of our setting, defined by $s(\e,\nu,\C)=s^{\mathrm{eq}}(\e,\nu)-\frac{\C^2}{2\theta^{\mathrm{eq}}(\e,\nu)}$, where $s^{\mathrm{eq}},\theta^{\mathrm{eq}}$ denote the equilibrium entropy density per particle and temperature, respectively. 

\medskip

We then prove that the generalized MIS equations with bulk viscosity possess the structure of a \emph{causal covariant Godunov-Boillat system}. This is the relativistic version of the structure identified by Godunov \cite{godunov1961interesting} that provides natural field variables that render  the equations symmetric hyperbolic. In the covariant setting, the theory has been developed by  Ruggeri \& Strumia \cite{ruggeri1981main}. The proof that our model is a Godunov-Boillat system will follow the same lines as in that reference. 
It is also proved there that entropy production is positive across weak Lax shocks in Godunov-Boillat systems. By our  theorem, this applies in particular  the  generalized MIS model with bulk viscosity, a fact that we note as a corollary. Godunov-Boillat variables have recently  been used in the construction of dissipative relativistic fluid dynamics in \cite{freistuhler2020class,freistuhler2017causal,freistuhler2014causal,freistuhler2018causal}.  

\medskip

The rest of this paper is structured as follows: 
In Section \ref{sec:constr-isra-stew}, we will construct our model based on the CDF. In Section \ref{sec:godun-boill-vari} we show that this model is a causal covariant Godunov-Boillat system (Theorem \ref{thm:main}), and state the corollary on entropy production across weak Lax shocks.

\bigskip

\subsection*{Notation}
We will use throughout the Minkowskian metric  $g=\diag(-1,1,1,1)$ on $\R^4$. Greek indices $\alpha,\beta,\dots$ will denote space-time components, and we will sum over repeated indices. The inverse of $g_{\alpha\beta}$ is denoted by $g^{\alpha\beta}$. Indices can be lifted or lowered  by $g^{\alpha\beta}$ and $g_{\alpha\beta}$ respectively. On $\R^4$ we will consider vector fields with values in $\R^6$ (i.e., we consider a relativistic 6-field theory). Indices of the components of these vectors will be Latin letters, $i,j,k,\dots\in \{0,\dots,5\}$, and summations will  be indicated for these. In some equations, we will only consider field  field components carrying indices $0,\dots,3$ - in these cases, we allow for greek indices, and summation will be indicated.

\section{Construction of Israel-Stewart theory with bulk viscosity via the conservation-dissipation formalism}
\label{sec:constr-isra-stew}

Consider a relativistic viscous fluid with stress energy tensor $T^{\alpha\beta}$ and four-current $J^\alpha$ given by 
\[
  \begin{split}
  T^{\alpha\beta}&= n \e u^\alpha u^\beta+(p+\pi)\Delta^{\alpha\beta}\\
  J^\alpha&= n  u^\alpha
\end{split}
\]
where  $ n =\nu^{-1}$ is the particle density, $\e$ the  internal energy per particle, $u$ the hydrodynamic velocity satisfying $u^\alpha u_\alpha=-1$,  $\Delta^{\alpha\beta}$ the symmetric spatial projection $\Delta^{\alpha\beta}=g^{\alpha\beta}+u^\alpha u^\beta$, $p$ the equilibrium pressure
, $\pi$ the bulk viscous pressure. We will also use the notation $\vec u=(u^1,u^2,u^3)^T$.

\medskip

Now we  adapt the   conservation-dissipation formalism  by Zhu, Hong, Yang, Yong \cite{zhu2015conservation} to the relativistic setting. 
One introduces a new field variable $\C$, that is supposed to be conjugate to the bulk viscosity $\pi$ (in the sense of \eqref{eq:7} below) and supposes that the equation of state is given in the form
\[
  s=s(\e,\nu,\C)\,,
\]
supposing that $s$ is a convex function of its arguments.
The variable $\C$ is conjugate to $\pi$ in the sense that
  \begin{equation}\label{eq:7}
  \frac{\pi}{\theta}=-\partial_\C s\,.
\end{equation}
As usual, the relation of $\theta,p$ to the other variables is fixed by requiring  $\theta^{-1}=\partial_\e s$, $\theta^{-1}p=\partial_\nu s$.

\medskip

Let $\Omega\subset\R^6$ denote the ``field manifold'', which for definiteness we write as
\[
\Omega=\left\{U=(\vec u^T,\e,\nu,\C)^T\in\R^3\times (0,\infty)^3\right\}\,.
\]
From now on, $u^\beta$, $\beta=0,\dots,3$, as well as $\e,n,\pi,p,\theta,s,\nu$ will be considered as real-valued smooth functions on $\Omega$ (that will be partly determined by the equation of state relating the different thermodynamic quantities). Whenever a 6-tuple of functions $W=(w_0,\dots,w_5)^T:\Omega\to \R^6$ satisfies $\det D_U w(U)\neq 0$ throughout $\Omega$, we will say that $w$ provides a set of coordinates (on $\Omega$). In such a case,
we define partial  derivatives with respect to $w_j$ in the obvious way by  $\partial_{U_j}f=\sum_{i=0}^5\partial_{U_j}w_i\partial_{w_i} f $.

\bigskip



\medskip

Our aim is to complete the set of equations $\partial_\alpha T^{\alpha\beta}=\partial_\alpha J^\alpha=0$ with an equation of motion for $\C$ such that the resulting system becomes  symmetric hyperbolic. For the  four-dimensional entropy flux 
  \[
    S^\alpha=n u^\alpha s(\e,\nu,\C)\,,\quad \alpha=0,\dots,3
  \]
we require that $\nabla_\alpha S^\alpha\geq 0$, which is the relativistic formulation of the second law of thermodynamics; this requirement will  restrict the choice of  evolution equation for $\C$.  We have that
\[
  \begin{split}
  \partial_\alpha (n u^\alpha s(\nu,\e,\C))&= \frac{\partial s}{\partial \e} \partial_\alpha (n u^\alpha\e)+\frac{\partial s}{\partial \nu}\partial_\alpha (n u^\alpha \nu)+\frac{\partial s}{\partial \C}\partial_\alpha (n u^\alpha \C)\\
  &=-\theta^{-1} u_\beta \partial_\alpha(n\e u^\alpha u^\beta)+p\theta^{-1}\partial_\alpha u^\alpha-n u^\alpha\theta^{-1}\pi\partial_\alpha \C\\
  &=\theta^{-1}u_\beta\partial_\alpha\left((p+\pi)\Delta^{\alpha\beta}\right)
     +p\theta^{-1}\partial_\alpha u^\alpha-n u^\alpha\theta^{-1}\pi\partial_\alpha \C\\
  &=-\theta^{-1}\pi\partial_\alpha u^\alpha
-n u^\alpha\theta^{-1}\pi\partial_\alpha \C\,.
\end{split}
\]
Here we have used identities like $u_\beta\partial_\alpha\Delta^{\alpha\beta}=-\partial_\alpha u^\alpha$,  etc.
    
    Grouping together the terms in $\pi$ and requiring $\partial_\alpha S^\alpha\geq 0$ leads to imposing the equation of motion for $\C$, 
    \[
                    n u^\alpha\partial_\alpha \C+\partial_\alpha u^\alpha=
           -M (U) \pi\,,
        \]
  where $M$ is  strictly positive, but otherwise arbitrary.

      \bigskip

      Summarizing, the generalized MIS equations with bulk viscosity are
        \begin{equation}\label{eq:12}
        \partial_\alpha F^\alpha(U)=Q(U) 
      \end{equation}
      with 
\[
F^\alpha(U)=\left(
  \begin{array}{c}
     n\e  u^\alpha u^\beta+(p+\pi)\Delta^{\alpha\beta}\\n\, u^\alpha \\ (n\C+1)u^\alpha
  \end{array}
\right)\,,\qquad Q(U)=\left(
  \begin{array}{c}
     0\\0 \\ -M (U) \pi
  \end{array}
\right)\,.
\]

\begin{remark}
A treatment of the non-relativistic analog of this theory can be found in \cite{MR3497743,MR3379901}.
\end{remark}

\section{Godunov-Boillat variables}
\label{sec:godun-boill-vari}
      
Following \cite{ruggeri1981main,freistuhler2019relativistic}, we will show that  that the generalized MIS equations with bulk viscosity are a convex density system:

\begin{definition}
  A set of PDE
    \begin{equation}\label{eq:2}
    \partial_{\alpha} F^{\alpha}_i=Q_i\qquad i=0,\dots,n-1
  \end{equation}
  is called a causal covariant Godunov-Boillat system and $\psi=(\psi_0,\dots,\psi_{n-1})^T$ Godunov variables or main field, as well as $X^\alpha=X^\alpha(\psi)$ a potential for \eqref{eq:2} if
    \begin{equation}\label{eq:3}
    F^{\alpha }_i=\partial_{\psi_i} X^\alpha(\psi)
  \end{equation}
  and
    \begin{equation}
   \left( \frac{\partial^2 X^\beta(\psi)}{\partial \psi_j\partial\psi_i}\right)_{i,j=0,\dots,n-1} \xi_\beta \quad\text{ is  definite for each fixed vector $\xi_\beta$ with $\xi_\beta\xi^\beta<0$.} \label{eq:4}
      \end{equation}

\end{definition}

\begin{remark}
  Any causal covariant Godunov-Boillat system can be written as a first order symmetric hyperbolic system,
  \[
    \partial_\alpha F^{\alpha }=(D_\psi^2  X^\alpha) \partial_\alpha \psi=\bar Q\,.
  \]
  The properties \eqref{eq:3} and \eqref{eq:4} supply an alternative proof of the symmetrizability of the (generalized) MIS equations with bulk viscosity that has been stated in \cite{bemfica2019causality}.  Symmetrizability in turn implies  existence of solutions in Sobolev spaces locally in time (see \cite{lax1957hyperbolic,friedrichs1954symmetric,dafermos2005hyperbolic,MR2744149}).
\end{remark}


\bigskip

      Now following the convex covariant density formalism from \cite{ruggeri1981main}, the Godunov variables $\psi$ are obtained as a solution of
      \[
        \psi^T D_U F^\alpha= D_U S^\alpha\,, \qquad \alpha=0,\dots,3\,.
      \]
      Obviously this system is overdetermined. We will now exploit the necessary conditions above and will see at the end of these calculations that there exists indeed a unique solution.

      \medskip

It is convenient to introduce yet another set of field coordinates:    Let $v^\alpha=n u^\alpha$, and $V=(v^0,\vec v,\e,\C)^T$, where $\vec v=(v^1,v^2,v^3)$. The vector $F^\alpha$ comprising energy-momentum, particle density flux and the expression whose differentiation yields the additional equation of motion reads in the $V$-variables as
\[
F^\alpha=\left(
  \begin{array}{c}
     \e \nu v^\alpha v^\beta+(p+\pi)\Delta^{\alpha\beta}\\v^\alpha \\ (\C+\nu)v^\alpha
  \end{array}
\right)
\]
This yields
    \[
      D_V F^\alpha=\left( 
        \begin{array}{ccc}
                     (A^{\alpha\beta}_\mu)_{\beta,\mu=0,\dots,3} &(B^{\alpha\beta})_{\beta=0,\dots,3} & \partial_\C(p+\pi)(\Delta^{\alpha\beta})_{\beta=0,\dots,3}\\ (\delta^\alpha_\mu)_{\mu=0,\dots,3} &0 & 0\\
           (\delta_\mu^\alpha (\C+\nu)+\nu^3v^\alpha v_\mu)_{\mu=0,\dots,3} &0 &  v^\alpha
        \end{array}\right)
    \]
    with
\[
      \begin{split}
      A^{\alpha\beta}_{\mu}&=
      \frac{\partial}{\partial v^{\mu}}\left((\nu \e+\nu^2 (p+\pi))v^\alpha v^\beta+(p+\pi) g^{\alpha\beta}\right)\\
      &=\left((\e +2\nu(p+\pi))v^\alpha v^\beta+\partial_\nu(p+\pi) \Delta^{\alpha\beta}\right)\nu^3v_\mu+\left(\e\nu +\nu^2(p+\pi)\right)(\delta^\alpha_\mu v^\beta+\delta^\beta_\mu v^\alpha)
      \          \end{split}
  \]
    
   and
    
  \[
      \begin{split}
      B^{\alpha\beta}&=\partial_\e\left((\nu \e+\nu^2 (p+\pi))v^\alpha v^\beta+(p+\pi) g^{\alpha\beta}\right)\\
     &= \nu v^\alpha v^\beta+ \partial_\e\left( p+ \pi\right)\Delta^{\alpha\beta}\,.
 \end{split}
   \]
Furthermore we have
\[
        D_V S^\alpha=\left(
            (\delta^{\alpha}_\mu s+\nu^3 p\theta^{-1}v^\alpha v_\mu)_{\mu=0,\dots,3}\,\,,\,\,\theta^{-1} v^\alpha\,\,,\,\, -v^\alpha\theta^{-1}\pi\right)
      \]

      We insert these formulae in $D_VS^\alpha=\psi^TD_VF^\alpha$, and have
      \[
        \begin{split}
        B^{\alpha\beta}\psi_\beta=\theta^{-1}v^\alpha \quad &\Rightarrow \quad \psi_\gamma=-\theta^{-1}u_\gamma, \quad \gamma=0,\dots,3\\
        v^\alpha \psi_5=-v^\alpha \theta^{-1}\pi\quad &\Rightarrow \quad \psi_5=-\theta^{-1}\pi\,.
      \end{split}
        \]

        We write $\psi^T=-\theta^{-1}(u_0,\vec u,g,\pi)$, where $g$ has to satisfy the identity

        \begin{equation}\label{eq:1}
        \begin{split}
         -\delta^\alpha_\mu s-\nu p\theta^{-1} u^\alpha u_\mu&= g \delta^\alpha_\mu
          +\theta^{-1}u_\beta \Bigg((\e+2\nu(p+\pi))u^\alpha u^\beta  u_\mu \\
          &\quad +(\e+\nu(p+\pi))(\delta_\mu^\alpha u^\beta+\delta_\mu^\beta u^\alpha)\Bigg)\\
          &\quad+\theta^{-1}\pi (\delta^\mu_\alpha (\C+\nu)+ \nu u^\alpha u_\mu)\,.
     \end{split}
   \end{equation}

      We calculate
      \[
        \begin{split}
 &       \theta^{-1}u_\beta \left((\e+2\nu(p+\pi))u^\alpha u^\beta  u_\mu +(\e+\nu(p+\pi))(\delta_\mu^\alpha u^\beta+\delta_\mu^\beta u^\alpha)\right)\\
&        =-\theta^{-1}\nu (p+\pi) u^\alpha u_\mu-\theta^{-1} (\e+\nu(p+\pi))\delta_\mu^\alpha
\end{split}     \]
and we see that \eqref{eq:1} holds true for  $\alpha=0,\dots,3$ provided that
  \begin{equation}\label{eq:8}
  g
  =\e-\theta s+\nu p-\C\pi\,,
\end{equation}
  which is the free enthalpy with natural variables $\theta, p,\pi$. 
  


Above we have made the assumption that $s$ is a strictly concave function of its arguments $\e,\nu,\C$. This is equivalent to the requirement that $g$ is a strictly concave function of $\theta,p,\pi$. In particular, this implies that $\theta,g,\pi$ provide coordinates for the thermodynamical state space (by that we mean that $(u^1,u^2,u^3,\theta,g,\pi)^T$ provide a set of coordinates on $\Omega$). Hence $\psi$ also provides a set of coordinates.
  
Following the general arguments from \cite{ruggeri1981main}, we now set
\[
  X^\alpha(\psi)=\psi^T F^\alpha(\psi) -S^\alpha(\psi)
\]
and will show in the upcoming theorem that this defines a Godunov-Boillat system under suitable assumptions on the state function. We use the following notation in the statement: Whenever $z_1,z_2,z_3$ provide coordinates for the thermodynamical state space, we write
\[
  \left.\frac{\partial h}{\partial z_1}\right|_{z_2,z_3}
\]
for the partial derivative with respect to $z_1$ of the function $(z_1,z_2,z_3)\mapsto h(z_1,z_2,z_3)$, and when $h_1,h_2,h_3$ are functions depending only on the thermodynamic state variables, we write
\[
  \frac{D(h_1,h_2,h_3)}{D(z_1,z_2,z_3)}
\]
for the Jacobain determinant $\det \left(\frac{\partial h_i}{\partial z_j}\right)_{i,j=1,2,3}$.
We also write
\[
  f=\e+\nu(p+\pi)\,.
  \]

\begin{theorem}
\label{thm:main}
  Suppose that $(\theta,p,\pi)\mapsto g(\theta,p,\pi)$ is strictly concave and that additionally
    \begin{equation}\label{eq:5}
    \begin{split}
      1+\frac{\nu^2}{f}\left.\frac{\partial p}{\partial \nu}\right|_{\theta,\C}&\geq 0\\
1+\frac{\nu^2}{f}\left(-\left.\frac{\partial\pi}{\partial\C}\right|_{\theta,p} +\left.\frac{\partial p}{\partial\nu}\right|_{\theta,\pi}+2\left.\frac{\partial\pi}{\partial\nu}\right|_{\theta,p}\right)&\geq 0
  \end{split}
\end{equation}
   on $\Omega$. 
  Then the function $X^\alpha(\psi)$ defines a Godunov-Boillat system.
\end{theorem}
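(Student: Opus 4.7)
The plan is to verify the two defining properties \eqref{eq:3} and \eqref{eq:4} of a Godunov--Boillat system in turn. Property \eqref{eq:3} is essentially immediate from the construction of $X^\alpha$: differentiating
\[
  X^\alpha(\psi)=\psi^T F^\alpha(\psi)-S^\alpha(\psi)
\]
componentwise in $\psi$ gives
\[
  \partial_{\psi_j}X^\alpha=F^\alpha_j+\psi^T\partial_{\psi_j}F^\alpha-\partial_{\psi_j}S^\alpha,
\]
so \eqref{eq:3} is equivalent to the identity $\psi^T D_\psi F^\alpha=D_\psi S^\alpha$. Since $\psi$ provides coordinates on $\Omega$ (by strict concavity of $g$ in $(\theta,p,\pi)$), the chain rule further reduces this identity to $\psi^T D_U F^\alpha=D_U S^\alpha$, which is precisely the relation that was used above to pin down $\psi_\gamma=-u_\gamma/\theta$, $\psi_5=-\pi/\theta$, and the formula \eqref{eq:8} for $g$.

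With \eqref{eq:3} established, one has $\partial_{\psi_i}\partial_{\psi_j}X^\alpha=\partial_{\psi_j}F^\alpha_i$, and symmetry of this matrix in $(i,j)$ is automatic from the equality of mixed partials. The nontrivial content of \eqref{eq:4} is then that $H(\xi):=(\partial_{\psi_i}\partial_{\psi_j}X^\alpha)\xi_\alpha$ be sign-definite for every timelike $\xi_\alpha$. The whole construction is Lorentz covariant, so the signature of $H(\xi)$ depends only on the Lorentz orbit of the pair $(u,\xi)$; boosting to the rest frame of the fluid and rotating the spatial part of $\xi$ to a coordinate axis reduces the problem to a two-parameter family of quadratic forms in the rest frame. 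The natural first step is to verify definiteness at $\xi_\alpha=-u_\alpha$ (i.e.\ for the density $-X^\alpha u_\alpha$ viewed as a function of $\psi$) and then to argue that the same hypotheses control the remaining directions.

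The explicit computation I would carry out uses the intermediate $V$-coordinates, for which $D_V F^\alpha$ and $D_V S^\alpha$ are already written out above. Via the chain rule $D_\psi=D_V\cdot D_\psi V$ (with $D_\psi V$ invertible by strict concavity of $g$), contracting with $-u_\alpha$ and specializing to the rest frame ($\vec v=0$), the matrix $H(-u)$ should split into a thermodynamic block in the variables dual to $(\theta,\pi)$ and a mechanical block in the variables dual to $\vec v$. The thermodynamic block is manifestly definite by strict concavity of $g$; the mechanical block carries diagonal entries controlled by $f=\e+\nu(p+\pi)$, coupled to the thermodynamic block by terms involving the partial derivatives $\partial_\nu p$, $\partial_\nu\pi$, and $\partial_\C\pi$. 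Taking the Schur complement of the thermodynamic block should produce, as the residual condition for definiteness of the mechanical part, exactly the two inequalities \eqref{eq:5}.

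The main obstacle I anticipate is this last bookkeeping step: matching the Schur-complement expression to the precise combinations appearing on the left-hand sides of \eqref{eq:5} requires repeatedly exchanging between the natural variables $(\e,\nu,\C)$ of $s$ and the natural variables $(\theta,p,\pi)$ of $g$, and using the Maxwell-type reciprocity relations coming from \eqref{eq:7} and from $g$ being the Legendre transform \eqref{eq:8}. Once this translation is in hand, definiteness of $H(-u)$ follows cleanly from strict concavity of $g$ together with \eqref{eq:5}, and the extension to the remaining timelike $\xi$ reduces to checking that the Lorentz-reduced one-parameter family of matrices stays sign-definite under the same hypotheses, by a continuity argument that the assumptions \eqref{eq:5} are designed to support.
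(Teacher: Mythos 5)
Your verification of \eqref{eq:3} matches the paper's: differentiate $X^\alpha=\psi^TF^\alpha-S^\alpha$ and use $\psi^TD_UF^\alpha=D_US^\alpha$. The problem is in your strategy for \eqref{eq:4}. You propose to establish definiteness at $\xi_\alpha=-u_\alpha$ (the rest frame) and then extend to all timelike $\xi$ ``by a continuity argument that the assumptions \eqref{eq:5} are designed to support.'' This cannot work as stated, for two reasons. First, at $\xi^\alpha=u^\alpha$ the coupling between the thermodynamic and kinematic blocks vanishes identically (one has $\delta\bar u=\bigl(\vec\xi-\tfrac{\xi^0}{u^0}\vec u\bigr)\cdot\delta\vec u=0$ there), so the quadratic form is block diagonal and its definiteness follows from strict concavity of $g$ and $f>0$ alone; the rest-frame Schur complement does \emph{not} produce the inequalities \eqref{eq:5}, contrary to your claim. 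Second, the family of unit timelike covectors is non-compact: writing $\bar u=\xi_\alpha u^\alpha$, the relevant one-parameter family is $\bar u^2\in[1,\infty)$, and definiteness at $\bar u^2=1$ gives no control as $\bar u^2\to\infty$. A continuity argument on a non-compact family establishes nothing; one needs a quantitative input, and that input is exactly \eqref{eq:5}.

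The paper handles all timelike $\xi$ simultaneously. After reducing the quadratic form to
\[
\theta\Q=\bar v\bigl(\delta^2g-f\,\delta u_\beta\delta u^\beta\bigr)-2\,\delta(p+\pi)\,\delta\bar u\,,
\]
it completes the square in $\delta\vec u$, which isolates a term proportional to $\bar u^2-1$ whose coefficient is the quadratic form of the matrix $\tilde G=-D^2_{(\theta,p,\pi)}g-\tfrac{\nu^2}{f}(\text{correction in the } (p,\pi) \text{ block})$; the two inequalities \eqref{eq:5} are then identified, via Jacobian manipulations between the $(\e,\nu,\C)$ and $(\theta,p,\pi)$ variables, as the non-negativity of the second and third leading principal minors of $\tilde G$. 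So \eqref{eq:5} is precisely what controls the highly boosted (near-null) directions $\bar u^2\gg1$, not a refinement of the rest-frame computation. To repair your argument you would need to replace ``continuity'' by an explicit completion of the square (or equivalent Schur complement in the \emph{general} frame) exhibiting the $\bar u^2$-dependence of the form, at which point you are doing the paper's computation.
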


\begin{proof}
  We need to verify \eqref{eq:3} and \eqref{eq:4}. Firstly,
  \[
    \frac{\partial X^\alpha}{\partial \psi_i}=(F^{\alpha})_i+\psi^T\frac{\partial F^\alpha}{\partial \psi_i}-\frac{\partial S^\alpha}{\partial \psi_i}\,.
\]  
Obviously
\[
  \psi^T\frac{\partial F^\alpha}{\partial \psi_i}-\frac{\partial S^\alpha}{\partial \psi_i}=\sum_{j=0}^5\left(\psi^T\frac{\partial F^\alpha}{\partial U_j}-\frac{\partial S^\alpha}{\partial U_j}\right)\frac{\partial U_j}{\partial \psi_i}=0\,,
\]
and hence \eqref{eq:3} is proved.

\bigskip

For an arbitrary future-directed 4-vector $\xi^\beta$ with $\xi_\beta \xi^\beta=-1$, we set
\[
  \begin{split}
  \bar X&=\xi_\alpha X^\alpha\\
  \bar F&=\xi_\alpha F^\alpha
\end{split}
\]

In order to verify \eqref{eq:4}, it suffices to prove
  \begin{equation}\label{eq:6}
    \begin{split}
      0&< \sum_{i,j=0}^5 v_iv_j\partial_{\psi^i}\partial_{\psi^j} \bar X(\psi)\\
& = \sum_{i,j=0}^5 v_iv_j\partial_{\psi_i}\bar F_j(\bar\psi)\\
   & = \sum_{i,j,k=0}^5 v_iv_j\partial_{\psi_j}\psi_k\partial_{\psi_i}\bar F_k(\bar\psi)\\
   &= \sum_{i,j,k=0}^5 \tilde v_i\tilde v_j \partial_{U_j}\psi_k\partial_{U_i}\bar F_k
  \end{split}
\end{equation}
for every $v\in T_P\Omega\equiv \R^6$, $P\in \Omega$, 
  where we have used the notation
\[\tilde v_j=\sum_{l=0}^5\frac{\partial U_j}{\partial  \psi_l}v_l\,.
\]
The vector $v\in T_P\Omega$ will be fixed from now on, and 
 for any (smooth) function $z:\Omega\to\R$ we will write
\[
  \delta z= \sum_{l=0}^5 \tilde v_l \partial_{U_l} z\,.
  \]




\newcommand{\Q}{\mathcal Q}

  We set
  \[
    \begin{split}
    \Q&:= \sum_{i,j,k=0}^5 \tilde v_i\tilde v_j \partial_{U_i}\psi_k\partial_{U_j}\bar F_k\\
    &= \sum_{k=0}^5  \delta\psi_k\delta\bar F_k
  \end{split}
  \]
  and write $\bar v=\xi_\alpha v^\alpha$, $\bar u=\xi_\alpha u^\alpha$. We have by $\psi=-\theta^{-1}(u_0,\vec u^T,g,\pi)^T$, 
    \[
      \theta^2\Q=\sum_{\beta=0}^4(u_\beta\delta\theta-\theta\delta u_\beta)\delta \bar F_\beta+(g\delta\theta-\theta\delta g)\delta \bar v+(\pi\delta \theta-\theta \delta\pi)\delta(\bar v(\C+\nu))
    \]
    In a lengthy but straightforward calculation, in which we use the identities \eqref{eq:8}, and 
    \[
      \begin{split}
      u^\beta\delta u_\beta&=0\\
      \delta g&=-s\delta\theta+\nu\delta p-\C\delta\pi\\
      \delta^2 g&=-\delta s\delta\theta+\delta \nu\delta p-\delta\C\delta\pi
    \end{split}
      \]
we obtain
    \begin{equation}\label{eq:9}
    \begin{split}
      \theta \Q&=\bar v(\delta^2 g-f\delta u_\beta\delta u^\beta )-2 \delta (p+\pi)\delta \bar u
  \end{split}
\end{equation} 

    We now introduce the notation $\delta \vec u=(\delta u^1,\delta u^2,\delta  u^3)^T$ and use the relations
    \[
      \begin{split}
      \delta u^0&=\sum_{i=1}^3 \frac{u_i\delta u^i}{u^0}\equiv\frac{\vec u\cdot \delta \vec u}{u^0}\\
      \delta u_\beta\delta u^\beta&=\sum_{i,j=1}^3\delta u^i(\delta_{ij}-(u^0)^{-2}u_iu_j)\delta u^j\\
      &= \delta\vec u^T\, (\id_{3\times 3}+\vec u\otimes \vec u)^{-1}\,\delta\vec u\\
      \delta \bar u&=\left(\vec\xi-\frac{\xi^0}{u^0}\vec u\right)\cdot \delta \vec u 
    \end{split}
\]
      and rewrite \eqref{eq:9} as 
  \begin{equation}\label{eq:10}
  \theta \Q= \bar v\delta^2 g- \bar v f \delta\vec u^T\, (\id_{3\times 3}+\vec u\otimes \vec u)^{-1}\,\delta\vec u-2 \delta(p+\pi)\delta \vec u\cdot \left(\vec\xi-\frac{\xi^0}{u^0}\vec u\right)\,.
\end{equation}
To ``complete the square'',  we add and subtract 
\[
  \begin{split}
  &\left(\vec\xi-\frac{\xi^0}{u^0}\vec u\right)^T\left(\id_{3\times 3}+\vec u\otimes \vec u\right)\left(\vec\xi-\frac{\xi^0}{u^0}\vec u\right)\frac{(\delta(p+\pi))^2}{\bar v f}\\
  &\qquad= (\xi_\beta\xi^\beta+|u_\beta\xi^\beta|^2)\frac{(\delta(p+\pi))^2}{\bar v f}\\
  &\qquad= (\bar u^2-1)\frac{(\delta(p+\pi))^2}{\bar v f}
\end{split}
\]
on the right hand side in \eqref{eq:10} and obtain
\[
  \theta \Q= \bar v\delta^2 g- \bar v f  \vec w^T(\id_{3\times 3}+\vec u\otimes \vec u)^{-1}\vec w+(\bar u^2-1)\frac{(\delta(p+\pi))^2}{\bar v f}\,,
\]
where
\[
  \vec w= \delta \vec u+(\id_{3\times 3}+\vec u\otimes \vec u)\left(\vec\xi-\frac{\xi^0}{u^0}\vec u\right)^T\frac{\delta(p+\pi)}{\bar v f}\,.
    \]
    Multiplying with $\nu^2 \bar v$ (which is negative since both $\xi^\mu$ and $v^\mu$ are future directed time-like), we obtain finally
      \begin{equation}\label{eq:11}
      \begin{split}
      \bar v  \nu^2 \theta \Q&=\bar u^2\delta^2 g-\bar u^2 f\vec w^T(\id_{3\times 3}+\vec u\otimes \vec u)^{-1}\vec w+(\bar u^2-1)\frac{\nu^2}{f} (\delta(p+\pi))^2\\
      &=-(\bar u^2-1)\left(-\delta g^2-\frac{\nu^2}{f} (\delta(p+\pi))^2\right)
      \underbrace{+\delta g^2-\bar u^2 f\vec w^T(\id_{3\times 3}+\vec u\otimes \vec u)^{-1}\vec w}_{<0\, \forall\, v\in T_p\Omega\setminus\{0\}}
    \end{split}
  \end{equation}
We have that $\bar u^2=|\xi_\beta u^\beta|\geq 1$ and hence the right hand side is negative for every $v\in T_P\Omega\setminus\{0\}$ if the matrix
    \begin{equation*}
\tilde G=\left(
  \begin{array}{ccc}
    -\partial_\theta^2g& -\partial_\theta\partial_pg&-\partial_\theta\partial_\pi g\\
    -\partial_\theta\partial_pg&-\partial_p^2 g-f^{-1}\nu^2&-\partial_p\partial_\pi g-f^{-1}\nu^2\\
    -\partial_\theta\partial_\pi g&-\partial_p\partial_\pi g-f^{-1}\nu^2&-\partial_\pi^2 g-f^{-1}\nu^2
  \end{array}\right)
\end{equation*}
is non-negative definite. By the strict concavity of $g$, we have $-\partial_\theta^2 g>0$. We claim that the two  conditions in \eqref{eq:5} are precisely the non-negativity of the other two leading principal minors of $\tilde G$, which proves negativity of the right hand side in \eqref{eq:11}. Indeed, 
\[
  \begin{split}
\det \left(
  \begin{array}{cc}
    -\partial_\theta^2g& -\partial_\theta\partial_pg\\
    -\partial_\theta\partial_pg&-\partial_p^2 g-f^{-1}\nu^2
  \end{array}\right)
&= \frac{D(-s,\nu,-\C)}{D(\theta,p,-\C)}\left(1+\frac{\nu^2}{f}\frac{D(\theta,p,-\C)}{D(-s,\nu,-\C)}\frac{D(-s,\nu,-\C)}{D(\theta,\nu,-\C)}\right)\\
&=\frac{D(-s,\nu,-\C)}{D(\theta,p,-\C)}
\left(  1+\frac{\nu^2}{f}\left.\frac{\partial p}{\partial \nu}\right|_{\theta,\C}\right)\geq 0
\end{split}
\]
   and
  \[
    \begin{split}
    \det \tilde G&=-\frac{D(-s,\nu,-\C)}{D(\theta,p,\pi)}-\frac{\nu^2}{f}\left(\frac{D(-s,\nu,-\C)}{D(\theta,p,-\C)}   +\frac{D(-s,\nu,-\C)}{D(\theta,\nu,\pi)}  -2 \frac{D(-s,\nu,-\C)}{D(\theta,\nu,p)}\right)\\
    &=-\frac{D(-s,\nu,-\C)}{D(\theta,p,\pi)}
    \left(1+\frac{\nu^2}{f}\left(\frac{D(\theta,p,\pi)}{D(\theta,p,-\C)}+\frac{D(\theta,p,\pi)}{D(\theta,\nu,\pi)}-2\frac{D(\theta,p,\pi)}{D(\theta,\nu,p)}\right)\right)\\
     &=-\det D^2_{(\theta,p,\pi)} g\left(1+\frac{\nu^2}{f}\left(-\left.\frac{\partial\pi}{\partial\C}\right|_{\theta,p} +\left.\frac{\partial p}{\partial\nu}\right|_{\theta,\pi}+2\left.\frac{\partial\pi}{\partial\nu}\right|_{\theta,p}\right)\right)\geq 0\,.
  \end{split}
    \]
This completes the proof.      
  \end{proof}

  In the introduction we have noted that the generalized MIS model with bulk viscosity allows for discontinuous shock solutions.  Lax's entropy condition guarantees that weak shocks will have positive entropy production across the shock, see e.g.~\cite{dafermos2005hyperbolic}. A proof of this fact in the relativistic setting (i.e., for causal covariant Godunov-Boillat systems) can be found in  \cite{ruggeri1981main}. We obtain the straightforward corollary:

\begin{corollary}
  \label{thm:shock}
  For weak  Lax shocks of (weak) solutions of the generalized MIS model with bulk viscosity \eqref{eq:12}, the entropy production is positive across the shock.
   \end{corollary}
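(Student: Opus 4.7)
The proof plan is essentially to invoke the general result for causal covariant Godunov-Boillat systems, which applies because Theorem \ref{thm:main} has just established that the generalized MIS system \eqref{eq:12} possesses precisely this structure (under the stated hypotheses). So the strategy is a one-line reduction: cite Theorem \ref{thm:main} to place ourselves within the framework of \cite{ruggeri1981main}, and then invoke the abstract theorem proved there concerning entropy production across weak Lax shocks.

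For the reader's benefit, I would briefly recall the structure of the underlying argument. A shock is a weak solution with a jump discontinuity across a spacelike hypersurface with unit normal $\xi_\alpha$; the Rankine-Hugoniot conditions read $[F^\alpha]\xi_\alpha = 0$, and the entropy production across the shock is $[S^\alpha]\xi_\alpha$. Using the main-field relations $F^\alpha_i = \partial_{\psi_i} X^\alpha$ and $S^\alpha = \psi^T F^\alpha - X^\alpha$, and introducing a smooth path $\psi(\tau)$ of states connecting the two sides of the shock (parameterized so that the shock strength is small), one expands $[S^\alpha]\xi_\alpha$ in powers of the jump $[\psi]$. The first two orders vanish by Rankine-Hugoniot, and the leading nontrivial contribution is of third order in $|[\psi]|$; its sign is controlled by the Hessian $(\partial_{\psi_i}\partial_{\psi_j}X^\alpha)\xi_\alpha$, which by \eqref{eq:4} is definite. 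The Lax entropy condition then selects the correct sign, yielding positivity.

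The main obstacle, in adapting that argument to our setting, is no obstacle at all: the heavy lifting (the verification that \eqref{eq:3} and \eqref{eq:4} hold) has already been done in Theorem \ref{thm:main}, and the abstract expansion above is independent of the specific form of the fluxes. I would therefore simply state that the conclusion follows from Theorem \ref{thm:main} together with the entropy-production theorem of \cite{ruggeri1981main}, perhaps remarking that the assumption that the shock be \emph{weak} is essential: it guarantees that $[\psi]$ is small enough for the cubic expansion to dominate, so that the definite Hessian controls the sign. Beyond this there is nothing genuinely new to prove.
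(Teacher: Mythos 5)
Your proposal is correct and matches the paper's argument exactly: the paper likewise treats this as an immediate consequence of Theorem \ref{thm:main} combined with the entropy-production result for causal covariant Godunov-Boillat systems proved in \cite{ruggeri1981main} (with \cite{dafermos2005hyperbolic} cited for the classical Lax argument). Your sketch of the underlying third-order expansion and the role of the definite Hessian is a faithful account of that cited argument, but the paper itself does not reproduce it.
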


    \bibliographystyle{alpha}
\bibliography{relhydro}

\newcommand{\etalchar}[1]{$^{#1}$}
\begin{thebibliography}{BDH{\etalchar{+}}21}

\bibitem[BDH{\etalchar{+}}21]{bemfica2021nonlinear}
F.~S. Bemfica, M.~M. Disconzi, V.~Hoang, J.~Noronha, and M.~Radosz.
\newblock Nonlinear constraints on relativistic fluids far from equilibrium.
\newblock {\em Phys. Rev. Lett.}, 126(22):222301, 2021.

\bibitem[BDN19]{bemfica2019causality}
F.~S. Bemfica, M.~M. Disconzi, and J.~Noronha.
\newblock Causality of the {E}instein-{I}srael-{S}tewart theory with bulk
  viscosity.
\newblock {\em Phys. Rev. Lett.}, 122(22):221602, 2019.

\bibitem[Daf05]{dafermos2005hyperbolic}
C.~M. Dafermos.
\newblock {\em Hyperbolic conservation laws in continuum physics}, volume~3.
\newblock Springer, 2005.

\bibitem[DHR20]{disconzi2020breakdown}
M.~M. Disconzi, V.~Hoang, and M.~Radosz.
\newblock Breakdown of smooth solutions to the {M\"u}ller-{I}srael-{S}tewart
  equations of relativistic viscous fluids.
\newblock {\em arXiv preprint arXiv:2008.03841}, 2020.

\bibitem[Dis19]{MR3927412}
M.~M. Disconzi.
\newblock On the existence of solutions and causality for relativistic viscous
  conformal fluids.
\newblock {\em Commun. Pure Appl. Anal.}, 18(4):1567--1599, 2019.

\bibitem[Fre19]{freistuhler2019relativistic}
H.~Freist{\"u}hler.
\newblock Relativistic barotropic fluids: A {G}odunov--{B}oillat formulation
  for their dynamics and a discussion of two special classes.
\newblock {\em Arch. Rat. Mech. Anal.}, 232(1):473--488, 2019.

\bibitem[Fre20]{freistuhler2020class}
H.~Freist{\"u}hler.
\newblock A class of {H}adamard well-posed five-field theories of dissipative
  relativistic fluid dynamics.
\newblock {\em J. Math. Phys.}, 61(3):033101, 2020.

\bibitem[Fri54]{friedrichs1954symmetric}
K.~O. Friedrichs.
\newblock Symmetric hyperbolic linear differential equations.
\newblock {\em Commun. Pure Appl. Math.}, 7(2):345--392, 1954.

\bibitem[FT14]{freistuhler2014causal}
H.~Freist{\"u}hler and B.~Temple.
\newblock Causal dissipation and shock profiles in the relativistic fluid
  dynamics of pure radiation.
\newblock {\em Proc. Roy. Soc. A}, 470(2166):20140055, 2014.

\bibitem[FT17]{freistuhler2017causal}
H.~Freist{\"u}hler and B.~Temple.
\newblock Causal dissipation for the relativistic dynamics of ideal gases.
\newblock {\em Proc. Roy. Soc. A}, 473(2201):20160729, 2017.

\bibitem[FT18]{freistuhler2018causal}
H.~Freist{\"u}hler and B.~Temple.
\newblock Causal dissipation in the relativistic dynamics of barotropic fluids.
\newblock {\em J. Math. Phys.}, 59(6):063101, 2018.

\bibitem[GL90]{geroch1990dissipative}
R.~Geroch and L.~Lindblom.
\newblock Dissipative relativistic fluid theories of divergence type.
\newblock {\em Phys. Rev. D}, 41(6):1855, 1990.

\bibitem[GL91]{geroch1991causal}
R.~Geroch and L.~Lindblom.
\newblock Causal theories of dissipative relativistic fluids.
\newblock {\em Annals of Physics}, 207(2):394--416, 1991.

\bibitem[God61]{godunov1961interesting}
S.~K. Godunov.
\newblock An interesting class of quasilinear systems.
\newblock In {\em Dokl. Acad. Nauk SSSR}, volume 139, pages 521--523, 1961.

\bibitem[IS79]{israel1979transient}
W.~Israel and J.~M. Stewart.
\newblock Transient relativistic thermodynamics and kinetic theory.
\newblock {\em Ann. Phys.}, 118(2):341--372, 1979.

\bibitem[JCVL96]{jou1996extended}
D.~Jou, J.~Casas-V{\'a}zquez, and G.~Lebon.
\newblock Extended irreversible thermodynamics.
\newblock {\em Extended Irreversible Thermodynamics}, pages 41--74, 1996.

\bibitem[Lax57]{lax1957hyperbolic}
P.~D. Lax.
\newblock Hyperbolic systems of conservation laws {II}.
\newblock {\em Commun. Pure Appl. Math.}, 10(4):537--566, 1957.

\bibitem[Lax73]{lax1973hyperbolic}
P.~D. Lax.
\newblock {\em Hyperbolic systems of conservation laws and the mathematical
  theory of shock waves}.
\newblock Society for Industrial and Applied Mathematics, 1973.

\bibitem[MR13]{muller2013rational}
I.~M{\"u}ller and T.~Ruggeri.
\newblock {\em Rational extended thermodynamics}, volume~37.
\newblock Springer Science \& Business Media, 2013.

\bibitem[M{\"u}l67]{muller1967paradoxon}
I.~M{\"u}ller.
\newblock Zum {P}aradoxon der {W\"a}rmeleitungstheorie.
\newblock {\em Zeitschrift f{\"u}r Physik}, 198(4):329--344, 1967.

\bibitem[{\"O}tt05]{ottinger2005beyond}
H.~C. {\"O}ttinger.
\newblock {\em Beyond equilibrium thermodynamics}.
\newblock John Wiley \& Sons, 2005.

\bibitem[RR19]{romatschke2019relativistic}
P.~Romatschke and U.~Romatschke.
\newblock {\em Relativistic fluid dynamics in and out of equilibrium: and
  applications to relativistic nuclear collisions}.
\newblock Cambridge University Press, 2019.

\bibitem[RS81]{ruggeri1981main}
T.~Ruggeri and A.~Strumia.
\newblock Main field and convex covariant density for quasi-linear hyperbolic
  systems: relativistic fluid dynamics.
\newblock {\em Annales de l'IHP Physique th{\'e}orique}, 34(1):65--84, 1981.

\bibitem[RS15]{MR3379901}
T.~Ruggeri and M.~Sugiyama.
\newblock {\em Rational extended thermodynamics beyond the monatomic gas}.
\newblock Springer, Cham, 2015.

\bibitem[Rug16]{MR3497743}
T.~Ruggeri.
\newblock Non-linear maximum entropy principle for a polyatomic gas subject to
  the dynamic pressure.
\newblock {\em Bull. Inst. Math. Acad. Sin. (N.S.)}, 11(1):1--22, 2016.

\bibitem[Tay11]{MR2744149}
M.~E. Taylor.
\newblock {\em Partial differential equations {III}. {N}onlinear equations},
  volume 117 of {\em Applied Mathematical Sciences}.
\newblock Springer, New York, second edition, 2011.

\bibitem[ZHYY15]{zhu2015conservation}
Y.~Zhu, L.~Hong, Z.~Yang, and W.-A. Yong.
\newblock Conservation-dissipation formalism of irreversible thermodynamics.
\newblock {\em Journal of Non-Equilibrium Thermodynamics}, 40(2):67--74, 2015.

\end{thebibliography}

\end{document}